\newcommand\semiHuge{\@setfontsize\semiHuge{22.72}{27.38}}
\newcommand\semihuge{\@setfontsize\semihuge{18.93}{23.45}}
\newcommand\semiLARGE{\@setfontsize\semiLARGE{15.77}{19.90}}
\newcommand\semiLarge{\@setfontsize\semiLarge{13.15}{15.87}}
\newcommand\semilarge{\@setfontsize\semilarge{11.46}{13.80}}
\newcommand\seminormal{\@setfontsize\seminormal{10.46}{12.77}}
\newtheorem{thm}{Theorem}[section]
\newtheorem{cor}[thm]{Corollary}
\newtheorem{lem}[thm]{Lemma}
\theoremstyle{definition}
\newtheorem{ex}[thm]{Example}
\theoremstyle{remark}
\newtheorem{rem}[thm]{Remark}
\begin{document}

\title{Vector-valued concentration inequalities on the biased discrete cube}
\author{Miriam Gordin}
\date{}

\begin{abstract}
  We present vector-valued concentration inequalities for the biased measure on the discrete cube $\{-1,1\}^n$ with an optimal dependence on the bias parameter and the Rademacher type of the target Banach space. These results allow us to obtain novel vector-valued concentration inequalities for the measure given by a product of Poisson distributions. We further obtain lower bounds on the average distortion with respect to the biased measure of embeddings of the hypercube into Banach spaces of nontrivial type which imply average non-embeddability.
\end{abstract}

\keywords{Hamming cube, Banach spaces, Rademacher type, metric embeddings, functional inequalities.}

\subjclass{Primary: 60E15; Secondary: 46B85, 46B09, 30L15.}

\maketitle

\section{Introduction}

Let $(X,\|\cdot\|)$ be a Banach space and $\{-1,1\}^n$ the $n$-dimensional discrete cube.
The concentration of functions $f:\{-1,1\}^n \to (X,\|\cdot\|)$ with respect to the uniform measure on the discrete cube has been studied extensively in the past decades. Functional inequalities, such as Poincar\'e and log-Sobolev inequalities \cite{bobkov1997poincare,diaconis1996logarithmic,talagrand1993isoperimetry}, as well as Talagrand's influence inequality \cite{talagrand1994on}, capture this concentration for scalar-valued functions. Recent works \cite{CorderoErausquin2023,CorderoErausquin2024}  extend these classical results to functions that take values in Banach spaces satisfying certain conditions.

 The work of Pisier \cite{pisier1986} featured the following elegant inequality in this setting, which holds for any Banach space $(X, \|\cdot\|)$. For a function $f:\{-1,1\}^n \to X$, and $\zeta,\delta$ independent random vectors uniformly distributed on the discrete hypercube $\{-1,1\}^n$, and $p \geq 1$,
\begin{equation}\label{eq:pisier_cube_original}
\mathbb E \|f(\zeta) - \mathbb E f(\zeta)\|^p \leq C(n)^p \mathbb E \! \left\|\sum_{j=1}^n \delta_j D_jf(\zeta)\right\|^p.
\end{equation}
In fact, Pisier proved an inequality of the form \eqref{eq:pisier_cube_original} for the case where $\zeta,\delta$ are independent standard Gaussians, where $C(n)$ is independent of $n$.
However, in the case where $\zeta,\delta$ were random signs, he was only able to obtain a dimensional constant $C(n) \sim \log n$; Talagrand showed in \cite{talagrand1993isoperimetry} that  this is sharp.
If \eqref{eq:pisier_cube_original} were true with a dimension-free constant, this would settle a classical conjecture of Enflo  in  Banach space geometry concerning the equivalence of Enflo and Rademacher type \cite{enflo1978infinite}.

After more than 40 years, the conjecture was settled in the affirmative in \cite{Ivanisvili2020}. The key theorem was a novel form of vector-valued functional inequality resulting from a Markov semigroup interpolation argument, which yielded a dimension-free constant, settling Enflo's conjecture. 
More precisely, they show, in the notation of this paper, that for any function $f:\{-1,1\}^n \to X$,
$\varepsilon \sim \text{Unif}\left(\{-1,1\}^n\right)$, and $p \geq 1$,
  \begin{equation}\label{eq:IvHV}
   \left(\mathbb E \|f(\varepsilon) - \mathbb E f(\varepsilon)\|^p \right)^{\frac{1}{p}}\leq   \int_0^\infty \left(\mathbb E \! \left\|\sum_{j=1}^n \delta_j(t) D_jf(\varepsilon)\right\|^p \right)^{\frac{1}{p}} \, dt
    \end{equation}
    where 
    \[
    \delta_j(t) := \frac{\xi_j(t) - e^{-t}}{e^t - e^{-t}}
    \]
    and $\{\xi_j(t)\}_{j=1}^n$ are i.i.d.\ biased Rademacher random variables, independent of $\varepsilon$, with 
    \[
    \mathbb P(\xi_j(t) = 1 ) = \frac{1 + e^{-t}}{2} \quad \text{ and } \mathbb P(\xi_j(t) = -1 ) = \frac{1 - e^{-t}}{2}.
    \]
Recall that a Banach space $(X,\|\cdot\|)$ is said to have  (Rademacher) type $p \in [1,2]$ if there exists a $C \in (0,\infty)$ such that for every $n$ and $x_1, \dots, x_n \in X$, 
\[
\mathbb E \left\| \sum_{i=1}^n \zeta_ix_i\right\|^p \leq C^p  \sum_{i=1}^n \left\|  x_i\right\|^p,
\]
where the expectation is with respect to i.i.d.\ uniform Rademacher variables $\zeta_1, \dots, \zeta_n \sim \text{Unif}\left(\{-1,1\}\right)$.
Denote by $T_p(X)$  the optimal constant in the inequality.
In particular, for a Rademacher space of type $p \in [1,2]$, \eqref{eq:IvHV} implies that for  $f:\{-1,1\}^n \to X$, and $\varepsilon \sim \text{Unif}\left(\{-1,1\}^n\right)$, 
    \begin{equation} \label{eq:Lp_Poincare_cube_unif}
     \mathbb E \|f(\varepsilon) - \mathbb E f(\varepsilon)\|^p \leq C^p  \sum_{j=1}^n \mathbb E \! \left\| D_jf(\varepsilon)\right\|^p,
      \end{equation}
where $C$ does not depend on $n$. In contrast, the original inequality of Pisier \eqref{eq:pisier_cube_original} yields $C \sim \log n$. Thus, the approach of \cite{Ivanisvili2020} introducing appropriately biased random coefficients in the inequality of Pisier yields a functional inequality \eqref{eq:IvHV} that implies a vector-valued Poincar\'e inequality \eqref{eq:Lp_Poincare_cube_unif} with dimension-free constant.

\subsection{Biased measure on the discrete cube}

Let the \textit{biased} product measure on  the $n$-dimensional discrete cube $\{-1,1\}^n$ with  parameter $\alpha \in (0,1)$ be given by
\[
\mu = \bigotimes_{i=1}^n \mu_i
\]
where 
\[
\mu_i(+1) = \alpha \qquad \text{and} \qquad  \mu_i(-1) = 1-\alpha.
\]

In this paper, we present a vector-valued concentration inequality  on the discrete cube with respect to this \textit{biased} measure.
For the main result, we make no assumption on the target Banach space $(X, \|\cdot\|)$ in bounding the concentration of a function $f:\{-1,1\}^n \to (X, \|\cdot\|)$ with respect to $\mu$ .

We obtain an optimal dependence on the bias parameter and the Rademacher type of the Banach space. This result is an extension of the  inequality for the uniform measure on the hypercube in \cite{Ivanisvili2020}. If one takes $X = \mathbb R$ endowed with the Euclidean norm, we capture the same dependence on the bias as the scalar Poincar\'e inequality. Thus, in terms of the dependence of the bias, one loses nothing in going from the scalar to the vector-valued Poincar\'e inequality. The constant also improves the scalar-valued $L^p$ Poincar\'e inequality on the biased hypercube in \cite{talagrand1993isoperimetry}.

Our inequality on the biased hypercube allows us to prove a novel vector-valued concentration inequality on the space $\mathbb N^n$ endowed with the measure given by a product of Poisson(1) distributions. The optimal dependence on the bias parameter $\alpha$ is essential to our proof technique, which obtains the inequality as a result of an appropriate scaling limit of the hypercube inequality. In fact, by the central limit theorem, Pisier's Gaussian inequality can be derived from the inequality of \cite{Ivanisvili2020}. Thus, obtaining optimal constants  for vector-valued inequalities on the hypercube can yield important results on other spaces through appropriate scaling limits.

In \cite{eskenazis2023some}, Eskenazis obtains a different vector-valued  inequality on the biased hypercube in the form of an $L^p$ Poincar\' e inequality. 
However, the explicit dependence on the bias parameter $\alpha$ in that result is not sufficient in order to obtain the scaling limit mentioned above; this dependence requires a key observation about the differential structure on the biased discrete cube which is described in  Remark \ref{rem:Di_alpha} and Lemma \ref{lem:dirichlet}. See Section \ref{subsec:comparison} for a more detailed comparison of the results of this paper with those of \cite{eskenazis2023some}.

An important motivation for proving such vector-valued functional inequalities is applications in metric geometry. The result of \cite{Ivanisvili2020}  implies optimal lower bounds for the distortion of bi-Lipschitz embeddings from the discrete hypercube into an arbitrary Banach space of Rademacher type $p$. In \cite{eskenazis2023some}, Eskenazis  refines this for \textit{finite-dimensional} target Banach spaces.

In fact, \cite{Ivanisvili2020} has deeper consequences than this for metric embeddings, since it implies that the non-embeddability phenomenon occurs not only in the worst case but also on average with respect to the uniform measure on the hypercube.
This result  poses the natural question of whether average non-embeddability holds for other measures on the hypercube such as the biased product measure. We show that non-embeddability occurs even for bias parameters $\alpha$ depending on $n$ decaying slower than $\frac{1}{n}$. 

\medskip
\noindent
\textbf{Acknowledgements.} I wish to thank Ramon van Handel for his guidance, discussion, and feedback on this work. I am grateful to Alexandros Eskenazis for helpful conversations and feedback. This work was partially supported by the NSF Graduate Research Fellowship under grant DGE-2039656 and NSF grant DMS-2347954.

\subsection{Biased heat semigroup on the discrete cube}
\label{sec:biased_Hypercube_preliminaries}

For a function $f:\{-1,1\}^n \to X$, we define the discrete derivative on the biased hypercube as 
\[
D_i^\alpha f(x) := f(x) - \mathbb E_i f(x),
\]
where $\mathbb E_i f(x) = \mathbb E f(x_1, \dots, x_{i-1},\varepsilon_i, x_{i+1}, \dots, x_n)$ where $\varepsilon_i \sim \mu_i$. 

\begin{rem}\label{rem:Di_alpha}
This definition is consistent with \cite{eskenazis2023some}. We can also see it as a mixture of two other notions of discrete derivative. Let 
\begin{align*}
  D_i f (x) &:= \frac{1}{2}(f(x_1, \dots, x_{i-1},x_i, x_{i+1}, \dots, x_n) - f(x_1, \dots, x_{i-1},-x_i, x_{i+1}, \dots, x_n)) \\[1em]
  \partial_i f (x) &:=  \frac{1}{2}(f(x_1, \dots, x_{i-1},1, x_{i+1}, \dots, x_n) - f(x_1, \dots, x_{i-1},-1, x_{i+1}, \dots, x_n)). 
\end{align*}
Then we can express
\begin{equation}\label{eq:Di_alpha_identity}
D_i^\alpha f (\varepsilon) = (1-2\alpha) \partial_i f (\varepsilon) + D_i f(\varepsilon) = (1- 2\alpha + \varepsilon_i) \partial_i f(\varepsilon).
\end{equation}
Notice that $\partial_i f(\varepsilon)$ does not depend on the value of $\varepsilon_i$, so by the independence of the coordinates of $\varepsilon$, we have that the random quantities $\partial_i f(\varepsilon)$ and $\varepsilon_i$ are in fact independent! This will be a key observation in our further analysis.
\end{rem}
The generator of the random walk on the hypercube with stationary measure $\mu$ is given by 
\[
\Delta = - \sum_{i=1}^n D_i^\alpha.
\]
Recall that the Dirichlet form associated to $\Delta$ and $\mu$  is given by 
\[
\mathcal E(f,g) := - \mathbb E_\mu[f(\varepsilon) \Delta g(\varepsilon)].
\]
The following representation of the Dirichlet form is critical to obtaining the optimal dependence on $\alpha$ in the constant in Theorem \ref{thm:biased_hypercube_pisier}.
\begin{lem}\label{lem:dirichlet}
  The Dirichlet form associated with $\Delta$ is given by 
  \begin{equation}
    \mathcal E(f,g) = 
    4\alpha (1- \alpha ) \sum_{i=1}^n \mathbb E_\mu [D_i f(\varepsilon) D_i g(\varepsilon)] \notag
  \end{equation}
\end{lem}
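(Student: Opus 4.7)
The plan is to handle each coordinate $i$ separately, exploiting the independence observation highlighted in Remark \ref{rem:Di_alpha}. By linearity and the definition of $\Delta$, we have
\[
\mathcal E(f,g) = -\mathbb E_\mu[f(\varepsilon)\Delta g(\varepsilon)] = \sum_{i=1}^n \mathbb E_\mu[f(\varepsilon)\, D_i^\alpha g(\varepsilon)],
\]
so it suffices to show $\mathbb E_\mu[f(\varepsilon)\, D_i^\alpha g(\varepsilon)] = 4\alpha(1-\alpha)\,\mathbb E_\mu[D_i f(\varepsilon)\, D_i g(\varepsilon)]$ for each $i$.

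The first step is a self-adjointness maneuver: since the conditional expectation $\mathbb E_i$ satisfies $\mathbb E_i[D_i^\alpha g] = 0$ by definition, I can replace $f$ by $f - \mathbb E_i f = D_i^\alpha f$ inside the expectation, giving
\[
\mathbb E_\mu[f(\varepsilon)\, D_i^\alpha g(\varepsilon)] = \mathbb E_\mu[D_i^\alpha f(\varepsilon)\, D_i^\alpha g(\varepsilon)].
\]
Next I plug in the identity \eqref{eq:Di_alpha_identity}, namely $D_i^\alpha h(\varepsilon) = (1 - 2\alpha + \varepsilon_i)\partial_i h(\varepsilon)$, applied to both $f$ and $g$:
\[
D_i^\alpha f(\varepsilon)\, D_i^\alpha g(\varepsilon) = (1 - 2\alpha + \varepsilon_i)^2\, \partial_i f(\varepsilon)\, \partial_i g(\varepsilon).
\]

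Now comes the crucial use of the independence remark: $\partial_i f(\varepsilon)\, \partial_i g(\varepsilon)$ does not depend on $\varepsilon_i$, and the coordinates of $\varepsilon$ are independent under $\mu$, so the factor $(1-2\alpha+\varepsilon_i)^2$ decouples. A direct computation using $\mathbb P(\varepsilon_i = 1) = \alpha$ gives
\[
\mathbb E[(1 - 2\alpha + \varepsilon_i)^2] = \alpha(2-2\alpha)^2 + (1-\alpha)(2\alpha)^2 = 4\alpha(1-\alpha).
\]
Finally, I relate $\partial_i$ to $D_i$: inspecting the definitions in Remark \ref{rem:Di_alpha} shows $D_i f(\varepsilon) = \varepsilon_i \partial_i f(\varepsilon)$, hence $D_i f(\varepsilon)\, D_i g(\varepsilon) = \varepsilon_i^2\,\partial_i f(\varepsilon)\,\partial_i g(\varepsilon) = \partial_i f(\varepsilon)\,\partial_i g(\varepsilon)$. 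Combining the last three displays yields the claim for each $i$, and summation finishes the proof.

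There is no real obstacle here; the only subtlety is recognizing that the decomposition \eqref{eq:Di_alpha_identity} yields the clean independence of $\partial_i f(\varepsilon)\partial_i g(\varepsilon)$ from $\varepsilon_i$, which is exactly what produces the $\alpha(1-\alpha)$ factor without any cross terms. Everything else is definitional bookkeeping.
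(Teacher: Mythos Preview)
Your proof is correct and uses essentially the same ingredients as the paper's: the identity \eqref{eq:Di_alpha_identity}, the independence of $\partial_i f(\varepsilon)$ from $\varepsilon_i$, and the relation $D_i f = \varepsilon_i \partial_i f$. The only cosmetic difference is that you first symmetrize via $\mathbb E_\mu[f\,D_i^\alpha g] = \mathbb E_\mu[D_i^\alpha f\,D_i^\alpha g]$ and then compute the scalar moment $\mathbb E[(1-2\alpha+\varepsilon_i)^2]=4\alpha(1-\alpha)$, whereas the paper applies \eqref{eq:Di_alpha_identity} only to $g$ and computes $\mathbb E_i[(1-2\alpha+\varepsilon_i)f(\varepsilon)] = 4\alpha(1-\alpha)\partial_i f(\varepsilon)$ directly; the two computations are equivalent.
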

\begin{proof}
Notice that by the identity \eqref{eq:Di_alpha_identity},
\[
  \mathbb E_\mu[f(\varepsilon)D_i^\alpha g(\varepsilon)] = 
  \mathbb E_\mu[(1- 2\alpha + \varepsilon_i)f(\varepsilon)\partial_ig(\varepsilon)]
\]
Recalling that $\partial_ig(\varepsilon)$ is independent of $\varepsilon_i$, we compute that 
\begin{align*}
  \mathbb E_i [(1- 2\alpha + \varepsilon_i)f(\varepsilon)\partial_ig(\varepsilon)]
&= \alpha (2- 2\alpha )f(1)\partial_ig(\varepsilon)
+ (1- \alpha ) (-2 \alpha) f(-1)  \partial_i g(\varepsilon) \\
&= 4\alpha (1- \alpha )\partial_i f(\varepsilon)\partial_i g(\varepsilon).
\end{align*}
Finally, using  the fact that $\varepsilon_i\partial_if(\varepsilon) = D_i f(\varepsilon)$ and $\varepsilon_i^2 = 1$, we have that 
\begin{equation}
  - \mathbb E[f(\varepsilon) \Delta g(\varepsilon)] = 
  4\alpha (1- \alpha ) \sum_{i=1}^n \mathbb E [D_i f(\varepsilon) D_i g(\varepsilon)].
\end{equation}

\end{proof}

We will consider the heat semigroup on the biased hypercube given by $P_t := e^{t \Delta}$.
It will be useful to work with the corresponding continuous-time random walk on the biased hypercube denoted by $\{X(t)\}_{t \geq 0}$. 

For the purpose of explicit computations, we may construct the random walk for $n=1$ as follows: let $Z_0,Z_1, Z_2, \dots$ be an i.i.d.\ sequence of biased Rademacher variables such that $Z_i \sim \mu$.

Let $\{N_t\}$ be a Poisson process with rate 1 independent of the $Z_i$s. Then the continuous-time random walk on the 1-dimensional biased hypercube with initial condition $X(0) = Z_0$ is given by 
\[
  X(t) = 
  Z_{N_t}\text{ for } t\geq0.
\]
Then for $n >1$, the random walk is given simply by  $n$  one-dimensional random walks independently on each coordinate. 
It can be readily checked that 
for $t \geq 0$,
  \[
  P_t f(x) = \mathbb E [f(X(t)) \mid X(0) = x].
  \]

We denote the heat kernel of the semigroup  by $p_t(\cdot, \cdot )$  so that 
\[
  \mathbb P (X(t) = y \mid X(0) = x) =: p_t(x,y) = \prod_{i=1}^n p_t(x_i,y_i),
\]
where it will be clear from context whether we are considering the heat kernel on a single coordinate or on the $n$-dimensional discrete hypercube.

We can write the heat kernel explicitly as 
\[
  p_t(x_i,y_i) = \frac{1-e^{-t}}{2} (2\alpha-1)y_i +  \frac{1+e^{-t}x_iy_i}{2}.
\]

\begin{rem}[on notation]
  Unless otherwise specified, throughout the paper $\varepsilon$ will denote a $\mu$-distributed random vector and all probabilities and expectations will be taken on the probability space on which the stationary random walk $\{X(t)\}_{t \geq 0}$ is defined. Furthermore, we will take $X(0) = \varepsilon$. 
    
  \end{rem}

We define the random vector $\delta(t)$ by
\[
  \delta_i(t) :
=\frac{D_i \, p_t(\varepsilon,X(t))}{p_t(\varepsilon,X(t))},
\]
where $D_i$ is acting on the first coordinate of $p_t(\cdot, \cdot )$. We can explicitly write $\delta(t)$ as 
\[
\delta_i(t) 
=\frac{e^{-t}\varepsilon_iX_i(t)}{(1-e^{-t})(2\alpha-1)X_i(t) + 1 + e^{-t}\varepsilon_iX_i(t)}.
\]

\begin{rem}
  In this work, we focus on the biased product measure which has the same bias $\alpha$ for each coordinate $i = 1, \dots, n$. The methods of proof of the results on the biased cube extend without issue to considering a different bias $\alpha_i$ for each coordinate. For clarity of notation and due to the focus on obtaining scaling limits as $\alpha$ becomes small, we have not expressed the results in this generality.
\end{rem}

\section{Results}

\subsection{Pisier-like inequality on the biased hypercube}

The main result is the following Pisier-like inequality on the biased hypercube, which is a generalization of the inequality for uniform measure on the hypercube in \cite{Ivanisvili2020}. Setting the bias $\alpha = \frac{1}{2}$ in the following theorem retrieves exactly the result in Theorem 1.4 of  \cite{Ivanisvili2020}.

\begin{thm} \label{thm:biased_hypercube_pisier}
  For any Banach space $(X,\|\cdot\|)$, function $f:\{-1,1\}^n \to X$, and $p \geq 1$, we have
  \[
  \left(\mathbb E\|f(\varepsilon) - \mathbb E f(\varepsilon)\|^p\right)^{\frac{1}{p}}
  \leq
  4 \alpha (1 - \alpha) \int_0^\infty  
  \left(\mathbb E\!\left\|\sum_{i=1}^n\delta_i(t)D_i f (\varepsilon) \right\|^p\right)^{\frac{1}{p}} dt.
  \]
\end{thm}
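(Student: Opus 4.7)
The plan is to follow the semigroup interpolation scheme of Ivanisvili--van Handel--Volberg, using the Dirichlet form representation in Lemma \ref{lem:dirichlet} to supply the $4\alpha(1-\alpha)$ factor. First, I would write
\[
f(\varepsilon) - \mathbb E f(\varepsilon) = \int_0^\infty (-\Delta) P_t f(\varepsilon) \, dt
\]
via the fundamental theorem of calculus (the integral converges exponentially since $P_t$ contracts at rate $e^{-t}$ on the finite cube) and apply Minkowski's integral inequality to reduce the theorem to the pointwise-in-$t$ estimate
\[
\bigl\|(-\Delta) P_t f(\varepsilon)\bigr\|_{L^p(X)} \le 4\alpha(1-\alpha) \Bigl\|\textstyle\sum_{i=1}^n \delta_i(t) D_i f(\varepsilon)\Bigr\|_{L^p(X)}.
\]

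The heart of the proof is the pointwise identity
\[
(-\Delta) P_t f(y) = 4\alpha(1-\alpha)\, \mathbb E\!\left[\sum_{i=1}^n \delta_i(t)\, D_i f(\varepsilon) \ \bigg|\ X(t) = y\right] \qquad (y \in \{-1,1\}^n),
\]
from which the $t$-fixed estimate follows by Jensen's inequality inside the conditional expectation (which passes through the Banach norm without any hypothesis on $X$) combined with stationarity $X(t) \sim \mu$. To prove the identity I would pass to its equivalent bilinear form, testing against a scalar function $g: \{-1,1\}^n \to \mathbb R$, and show
\[
\mathbb E[g(\varepsilon) \cdot (-\Delta) P_t f(\varepsilon)] = 4\alpha(1-\alpha)\, \mathbb E\!\left[g(X(t)) \sum_{i=1}^n \delta_i(t) D_i f(\varepsilon)\right].
\]
Lemma \ref{lem:dirichlet} rewrites the left-hand side as $4\alpha(1-\alpha) \sum_i \mathbb E[D_i g \cdot D_i P_t f]$. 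I would then transform each summand using three ingredients: (i) the commutation relation $D_i P_t f = e^{-t} P_t^{(-i)} D_i f$, where $P_t^{(-i)} := \bigotimes_{j \ne i} P_t^{(j)}$ is the product of the one-dimensional semigroups on the other coordinates (this follows from $D_i P_t^{(i)} = e^{-t} D_i$, which itself comes from $P_t^{(i)} h = \mathbb E_i h + e^{-t}(h - \mathbb E_i h)$ combined with $D_i \mathbb E_i h = 0$); (ii) the $\mu$-self-adjointness of $P_t^{(-i)}$; and (iii) the score-function identity
\[
\mathbb E[\delta_i(t)\, h(X(t)) \mid \varepsilon] = \sum_y D_i p_t(\varepsilon, y)\, h(y) = D_i P_t h(\varepsilon),
\]
which is immediate from the definition of $\delta_i(t)$ and the linearity of $D_i$ in its first argument. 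Combining (i)--(iii) with $h = g$, the compensating factors $e^{-t}$ and $e^{+t}$ cancel, leaving $\mathbb E[D_i g \cdot D_i P_t f] = \mathbb E[\delta_i(t) g(X(t)) D_i f(\varepsilon)]$; summing over $i$ yields the bilinear identity, and a test against indicator functions recovers the pointwise version.

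The main obstacle is the precise bookkeeping of the $e^{\pm t}$ factors so that they cancel and only the $4\alpha(1-\alpha)$ from Lemma \ref{lem:dirichlet} survives as the constant multiplying the integral; everything else is routine, since the vector-valued extension of Lemma \ref{lem:dirichlet} follows by linearity and Jensen passes through arbitrary Banach norms. Sharpness of the constant $4\alpha(1-\alpha)$ can be checked directly in the one-dimensional $L^1$ case, where both sides evaluate to $4\alpha(1-\alpha) \|\partial_1 f\|$.
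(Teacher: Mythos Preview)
Your proposal is correct and follows the same semigroup-interpolation route as the paper: fundamental theorem of calculus for $P_t$, the Dirichlet form identity of Lemma~\ref{lem:dirichlet} to extract the factor $4\alpha(1-\alpha)$, and the score representation of Lemma~\ref{lem:Pt}; your pointwise-identity-plus-conditional-Jensen packaging is simply the dual of the paper's $L^p$--$L^{p'}$ duality and H\"older step. The detour through $D_i P_t = e^{-t} P_t^{(-i)} D_i$ and the $e^{\pm t}$ cancellation is a slightly longer path to the key identity $\mathbb E[D_i g \cdot D_i P_t f] = \mathbb E[\delta_i(t)\, g(X(t))\, D_i f(\varepsilon)]$ than the paper's (which moves $P_t$ onto $g$ by self-adjointness and then applies Lemma~\ref{lem:Pt} to $g$), but it is valid and yields the same conclusion.
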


We have precise  estimates on the coefficients $\delta(t)$. First we observe that 
the coefficients are independent and identically distributed across indices $i = 1, \dots, n$. Furthermore, for any $t > 0$ and 
for any $x_i \in \{-1,1\}$,
\[
\mathbb E \big[\, \delta_i(t) \mid X_i(0) = x_i\big] = 0.
\]
We have explicit bounds on the $L^p$ norms of the $\delta_i(t)$, which are  critical to obtain Corollary \ref{cor:biased_hypercube_type}.

\begin{lem} \label{lem:eta_unif_bound}
  For $\alpha < \frac{1}{2}$ and $p \geq 1$,
 \[
   \sup_{x \in \{-1,1\}^n} \mathbb E \Big[\big|\delta_i(t)\big|^p \mid X(0) = x\Big] \leq  (2\alpha )^{1-p} e^{-tp}(1 - e^{-t})^{1-p}.
\]
\end{lem}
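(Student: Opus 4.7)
The plan is to express $|\delta_i(t)|$ directly in terms of the heat kernel, reduce the conditional expectation to a two-term sum over a single coordinate, and then bound each term by observing that the infimum of $p_t(x,y)$ over $x,y \in \{-1,1\}$ equals $\alpha(1-e^{-t})$ when $\alpha < \tfrac{1}{2}$.

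First I would note that $\delta_i(t)$ depends only on the $i$-th coordinates $\varepsilon_i$ and $X_i(t)$, and by the product structure of $p_t$ the conditional law of $X_i(t)$ given $X(0)=x$ depends only on $x_i$, so it suffices to treat the one-dimensional problem. Using the definition $\delta_i(t) = D_i p_t(\varepsilon, X(t))/p_t(\varepsilon, X(t))$ together with the elementary computation
\[
D_i p_t(x_i, y_i) = \tfrac{1}{2}\bigl(p_t(x_i,y_i) - p_t(-x_i,y_i)\bigr) = \tfrac{1}{2}e^{-t}x_iy_i,
\]
I obtain the clean identity
\[
|\delta_i(t)| \;=\; \frac{e^{-t}}{2\,p_t(\varepsilon_i, X_i(t))}.
\]
Conditioning on $X_i(0)=x$ fixes $\varepsilon_i = x$, so
\[
\mathbb E\bigl[|\delta_i(t)|^p \mid X(0)=x\bigr]
= \sum_{y\in\{-1,1\}} p_t(x,y)\cdot \frac{e^{-tp}}{2^p p_t(x,y)^{p}}
= \frac{e^{-tp}}{2^p}\sum_{y\in\{-1,1\}} p_t(x,y)^{1-p}.
\]

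Since $1-p\le 0$, the function $u\mapsto u^{1-p}$ is decreasing on $(0,\infty)$, so each summand is bounded by $(\min_{x',y'} p_t(x',y'))^{1-p}$. The next step is to compute the four values of $p_t$ from the explicit formula
\[
p_t(x,y)=\tfrac{1-e^{-t}}{2}(2\alpha-1)y + \tfrac{1+e^{-t}xy}{2},
\]
which yields
\[
p_t(1,1)=\alpha+(1-\alpha)e^{-t},\quad p_t(1,-1)=(1-\alpha)(1-e^{-t}),
\]
\[
p_t(-1,1)=\alpha(1-e^{-t}),\quad p_t(-1,-1)=(1-\alpha)+\alpha e^{-t}.
\]
For $\alpha<\tfrac12$, a quick case check shows the minimum is attained at $(x,y)=(-1,1)$ and equals $\alpha(1-e^{-t})$.

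Combining, I get
\[
\sum_{y}p_t(x,y)^{1-p}\le 2\bigl(\alpha(1-e^{-t})\bigr)^{1-p},
\]
and multiplying by $e^{-tp}/2^p$ gives the claimed bound $(2\alpha)^{1-p}e^{-tp}(1-e^{-t})^{1-p}$, uniformly in $x$. There is essentially no obstacle here: the only point requiring attention is verifying that the minimum of the four heat-kernel values is indeed $\alpha(1-e^{-t})$ under the assumption $\alpha<\tfrac12$, which is elementary but genuinely uses the biasedness hypothesis (without it, the two candidates $p_t(-1,1)$ and $p_t(1,-1)$ would tie and the factor $\alpha$ in the final bound would be replaced by $\min(\alpha,1-\alpha)$).
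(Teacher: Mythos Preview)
Your proof is correct and follows essentially the same route as the paper: both reduce to the identity $\mathbb E_x\bigl[|\delta_i(t)|^p\bigr] = \tfrac{e^{-tp}}{2^p}\sum_{y}p_t(x_i,y)^{1-p}$ and then bound each summand via a lower bound on the heat kernel of the form $p_t(x_i,y)\ge \alpha(1-e^{-t})$. The only cosmetic difference is that the paper splits into cases $x_i=\pm1$ and works with the quantities $f_1=2p_t(x_i,1)$, $f_2=2p_t(x_i,-1)$, whereas you simply list all four kernel values and read off the global minimum; your presentation is slightly more streamlined but the argument is the same.
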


Applying a standard symmetrization argument, we obtain the following vector-valued biased Poincar\'e inequality.

\begin{cor} \label{cor:biased_hypercube_type}
  For any Banach space $(X,\|\cdot\|)$ of Rademacher type $p \in [1,2]$, function $f:\{-1,1\}^n \to X$, and $\alpha < \frac{1}{2}$, we have that
  \begin{align*}
    \left(\mathbb E\|f(\varepsilon) - \mathbb E f(\varepsilon)\|^p\right)^{\frac{1}{p}}
    &\leq
     32 T_p(X)\alpha^\frac{1}{p} 
    \left(\sum_{i=1}^n\mathbb E\!
    \left\|D_i f(\varepsilon)\right\|^p\right)^{\frac{1}{p}}.
  \end{align*}
\end{cor}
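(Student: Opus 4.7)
The plan is to derive the corollary from Theorem~\ref{thm:biased_hypercube_pisier} by a standard conditional symmetrization and an application of Rademacher type, controlled by the pointwise bound on $\delta_i(t)$ in Lemma~\ref{lem:eta_unif_bound}. I would fix $t>0$ and work inside the integrand on the right-hand side of Theorem~\ref{thm:biased_hypercube_pisier}, then integrate at the end.

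First, I would condition on $\varepsilon=X(0)$. The excerpt notes that, given $X_i(0)$, the variables $\delta_i(t)$ are mean-zero, and since the random walk evolves independently in each coordinate they are also independent across $i$ given $\varepsilon$. Moreover, the vectors $D_i f(\varepsilon)$ are measurable with respect to $\varepsilon$, so conditional on $\varepsilon$ they are constants. Letting $\zeta_1,\dots,\zeta_n$ be independent Rademacher variables independent of everything else, the standard symmetrization argument (pass to an independent copy $\delta_i'(t)$, use the triangle inequality, and insert random signs) gives
\[
\mathbb E\!\left\|\sum_{i=1}^n \delta_i(t) D_i f(\varepsilon)\right\|^p \le 2^p\, \mathbb E\!\left\|\sum_{i=1}^n \zeta_i\, \delta_i(t) D_i f(\varepsilon)\right\|^p.
\]
Applying the definition of Rademacher type to the inner $\zeta$-expectation, then using the conditional independence of $\delta_i(t)$ and $D_i f(\varepsilon)$ given $\varepsilon$ together with Lemma~\ref{lem:eta_unif_bound}, bounds this by
\[
2^p T_p(X)^p (2\alpha)^{1-p} e^{-tp}(1-e^{-t})^{1-p} \sum_{i=1}^n \mathbb E\|D_i f(\varepsilon)\|^p.
\]

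Taking $p$-th roots and substituting into Theorem~\ref{thm:biased_hypercube_pisier} reduces the problem to evaluating $\int_0^\infty e^{-t}(1-e^{-t})^{1/p-1}\,dt$, which equals $p$ by the substitution $u=1-e^{-t}$. Collecting constants yields the coefficient $4\alpha(1-\alpha)\cdot 2 T_p(X)\cdot p\cdot (2\alpha)^{1/p-1} = 4p(1-\alpha)(2\alpha)^{1/p} T_p(X)$ in front of $\bigl(\sum_i \mathbb E\|D_i f(\varepsilon)\|^p\bigr)^{1/p}$.

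The remaining step is purely cosmetic: using $p\le 2$, $1-\alpha<1$, and $2^{1/p}\le 2$, this prefactor is at most $32\, T_p(X)\alpha^{1/p}$, matching the statement. There is no real obstacle in this argument; the only place to be careful is keeping the conditioning on $\varepsilon$ straight so that both the symmetrization (which needs the conditional mean-zero and independence of the $\delta_i(t)$) and the type inequality (which needs the $D_i f(\varepsilon)$ to be frozen) apply cleanly. The final numerical simplification absorbs the $(1-\alpha)$ and $(2\alpha)^{1/p}/\alpha^{1/p}$ factors into a single absolute constant.
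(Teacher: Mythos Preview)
Your proposal is correct and follows essentially the same route as the paper: conditional symmetrization on $\varepsilon$ using the mean-zero, independent $\delta_i(t)$, then Rademacher type, then Lemma~\ref{lem:eta_unif_bound}, then the integral $\int_0^\infty e^{-t}(1-e^{-t})^{1/p-1}\,dt=p$. Your constant tracking is actually slightly sharper than needed (you get $4p(1-\alpha)(2\alpha)^{1/p}T_p(X)\le 16\,T_p(X)\alpha^{1/p}$), which of course still implies the stated bound with constant $32$.
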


\begin{rem}
  In order to showcase the optimal dependence on $\alpha$, we impose the assumption that $\alpha < \frac{1}{2}$. By symmetry, the case $\alpha \geq \frac{1}{2}$ can be obtained by replacing $\alpha$ by $1 - \alpha$. 
\end{rem}
Through a simple example, one can see that these inequalities are optimal in the sense of capturing the dependence on $\alpha$ and the type $p$ of $X$.

\begin{ex}
  Let $(X,\|\cdot\|) = (\mathbb R^n, \| \cdot \|_{\ell^p} )$ and
\[
f(\varepsilon) := n^{-\frac{1}{p}} \begin{bmatrix}
  \varepsilon_1 \\
  \varepsilon_2 \\
  \vdots \\
  \varepsilon_n \\
\end{bmatrix}.
\]
Suppose without loss of generality that $\alpha < \frac{1}{2}$.
Then by the law of large numbers,
\[
\|f(\varepsilon) - \mathbb E f(\varepsilon)\| = \left( \frac{1}{n}\sum_{i=1}^n \left|\varepsilon_i - \mathbb E \varepsilon_i \right|^p\right)^{\frac{1}{p}} \xrightarrow[n \to \infty]{ }  \left(\mathbb E |\varepsilon_1 - \mathbb E \varepsilon_1 |^p \right)^\frac{1}{p} \sim \alpha^\frac{1}{p}.
\]
Meanwhile, notice also that for this choice of $f$, we can bound the right-hand side of Corollary \ref{cor:biased_hypercube_type} by $\alpha^\frac{1}{p}$.
Thus, we have shown that Theorem \ref{thm:biased_hypercube_pisier} and Corollary \ref{cor:biased_hypercube_type} 
attain the optimal dependence in terms of $\alpha$, $p$, and $n$, up to universal constants. In fact, even the inequality
\[
  \mathbb E \|f(\varepsilon) - \mathbb E f(\varepsilon)\|
  \leq
  \int_0^\infty  4 \alpha (1 - \alpha)
  \left(\mathbb E\!\left\|\sum_{i=1}^n\delta_i(t)D_i f (\varepsilon) \right\|^p\right)^{\frac{1}{p}} dt
  \]
  which is weaker than that of Theorem \ref{thm:biased_hypercube_pisier}
  attains the optimal dependence on $\alpha$, $p$, and $n$.

\end{ex}

\subsection{Comparison with inequality obtained in \cite{eskenazis2023some}}
\label{subsec:comparison}
As noted in the introduction, in \cite{eskenazis2023some}, Eskenazis obtains a vector-valued Poincar\'e inequality on the biased hypercube. In the notation of this paper, Theorem 14 of \cite{eskenazis2023some} states that for any function $f:\{-1,1\}^n \to (X,\|\cdot\|)$ where $X$ is a Banach space of type $p \in [1,2]$, 
\[
  \left(\mathbb E\|f(\varepsilon) - \mathbb E f(\varepsilon)\|^p\right)^{\frac{1}{p}}
  \leq 2 \pi T_p(X)
  \left(\sum_{i=1}^n\mathbb E\!
  \left\|D_i^\alpha f(\varepsilon)\right\|^p\right)^{\frac{1}{p}}.
\]
Recall by the identity \eqref{eq:Di_alpha_identity} and the fact that $\partial_if(\varepsilon)$ is independent of $\varepsilon_i$ (see Remark \ref{rem:Di_alpha}),
\[
  \mathbb E\!
  \left\|D_i^\alpha f(\varepsilon)\right\|^p
  = \mathbb E\!\left[(1- 2\alpha + \varepsilon_i)^p\right]\mathbb E \left\|\partial_i f(\varepsilon)\right\|^p.
\]
As computed in the example above, when $\alpha < \frac{1}{2}$, 
\[
  \mathbb E\!\left[(1- 2\alpha + \varepsilon_i)^p\right] \sim \alpha.
\]
Thus, up to universal constants, Theorem 14 of \cite{eskenazis2023some} and Corollary \ref{cor:biased_hypercube_type} are equivalent, but identity \eqref{eq:Di_alpha_identity} is necessary in order to obtain the explicit dependence on the bias parameter.

As an intermediate step in the proof of Theorem 14 of \cite{eskenazis2023some}, Eskenazis also states a Pisier-like inequality similar to that of Theorem \ref{thm:biased_hypercube_pisier}; however, this result does not suffice in order to obtain the scaling limit to prove the Pisier inequality for the product of Poisson distributions present in Theorem \ref{thm:poisson_inequality}.

\subsection{Average non-embeddability of the biased discrete hypercube}
\label{sec:avg_embed}

The study of average distortion of metric embeddings was introduced by Rabinovich in \cite{rabinovich2008average} and has led to new insights  in the Ribe program \cite{naor2012introduction} and combinatorics \cite{lee2017separators}. Rabinovich considers average distortion for uniform measures on finite metric spaces -- we state the definition for \textit{any} probability measure, which is a natural extension.\footnote{When the metric space is \textit{infinite}, average distortion has been defined analogously for all probability measures, see e.g.\ \cite{naor2021average}.}

Let $(M,d)$ be any metric space and let $\nu$ be a measure on $M$.
Then we say an embedding $f:(M,d) \to (X, \|\cdot\|)$ has $\nu$-average distortion $D \in \mathbb R$ if $f$ is $D$-Lipschitz and
\[
\mathbb E_{\nu \otimes \nu} \|f(\varepsilon) - f(\varepsilon')\| \geq \mathbb E_{\nu \otimes \nu} |d(\varepsilon,\varepsilon')|.
\]  

Consider the discrete hypercube $\{-1,1\}^n$ with the Hamming metric which we will denote as  $d\, (\cdot,\cdot)$, where for $x,y \in\{-1,1\}^n $,
\[
d(x,y) = \sum_{i=1}^n \mathds{1}_{\{x_i \neq y_i\}}
\]
Corollary \ref{cor:biased_hypercube_type} implies the following lower bound on the $\mu$-average distortion for embeddings of the hypercube into a Banach space of type $p$. 

\begin{cor} \label{cor:avg_nonembed}
  Let $(X, \|\cdot\|)$ be a Banach space of Rademacher type $p$ and $\mu$ the $\alpha$-biased product measure on $\{-1,1\}^n$ as defined above.  Then for any $f: \{-1,1\}^n \to (X, \|\cdot\|)$ with $f$ having $\mu$-average distortion $D$,
  \[
  D \gtrsim_X ( \alpha n)^{1-\frac{1}{p}},
  \]
  where the result holds up to universal constants depending on $T_p(X)$.

\end{cor}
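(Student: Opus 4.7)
The plan is to sandwich the distortion $D$ between a direct lower bound on the average Hamming distance and an upper bound on the average Banach-space distance coming from Corollary \ref{cor:biased_hypercube_type}.

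First I would compute the baseline average Hamming distance. Since the coordinates of $\varepsilon,\varepsilon' \sim \mu$ are independent and $\mathbb{P}(\varepsilon_i \neq \varepsilon_i') = 2\alpha(1-\alpha)$, we get $\mathbb{E}_{\mu \otimes \mu} d(\varepsilon,\varepsilon') = 2n\alpha(1-\alpha)$, which by symmetry we may assume is $\gtrsim n\alpha$ (the case $\alpha \geq 1/2$ reduces to the case $\alpha \leq 1/2$ by replacing $\alpha$ with $1-\alpha$). The average-distortion hypothesis then forces $\mathbb{E}_{\mu \otimes \mu}\|f(\varepsilon)-f(\varepsilon')\| \gtrsim n\alpha$.

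Second, I would upper-bound the left-hand side via Corollary \ref{cor:biased_hypercube_type}. The triangle inequality and an independent copy of $\varepsilon$ give
\[
\mathbb{E}_{\mu \otimes \mu}\|f(\varepsilon)-f(\varepsilon')\| \leq 2\, \mathbb{E}\|f(\varepsilon)-\mathbb{E} f(\varepsilon)\| \leq 2 \bigl(\mathbb{E}\|f(\varepsilon)-\mathbb{E} f(\varepsilon)\|^p\bigr)^{1/p}
\]
by Jensen. Applying Corollary \ref{cor:biased_hypercube_type} bounds this by $64\, T_p(X)\, \alpha^{1/p}\bigl(\sum_{i=1}^n \mathbb{E}\|D_i f(\varepsilon)\|^p\bigr)^{1/p}$.

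Third, I would bound each $\|D_i f(\varepsilon)\|$ using the Lipschitz hypothesis. Observe that $D_i f(\varepsilon) = \tfrac{1}{2}\bigl(f(\varepsilon) - f(\varepsilon^{(i)})\bigr)$ where $\varepsilon^{(i)}$ differs from $\varepsilon$ only in coordinate $i$, so $d(\varepsilon,\varepsilon^{(i)}) = 1$ and the $D$-Lipschitz property yields $\|D_i f(\varepsilon)\| \leq D/2$ pointwise. Hence $\sum_{i=1}^n \mathbb{E}\|D_i f(\varepsilon)\|^p \leq n (D/2)^p$, and combining gives
\[
n\alpha \;\lesssim\; \mathbb{E}_{\mu \otimes \mu}\|f(\varepsilon)-f(\varepsilon')\| \;\lesssim_X\; \alpha^{1/p}\, n^{1/p}\, D.
\]
Rearranging produces $D \gtrsim_X (\alpha n)^{1-1/p}$, as required. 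There is no real obstacle here: the argument is the standard sandwich argument used after any vector-valued Poincar\'e-type inequality, and all the heavy lifting (the dependence on $\alpha$ and on the type $p$) is already packaged into Corollary \ref{cor:biased_hypercube_type}.
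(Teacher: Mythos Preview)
Your proof is correct and follows essentially the same approach as the paper's own proof: both use the $D$-Lipschitz hypothesis to bound each $\|D_if\|$, apply Corollary~\ref{cor:biased_hypercube_type}, pass between $\mathbb E\|f(\varepsilon)-f(\varepsilon')\|$ and $\mathbb E\|f(\varepsilon)-\mathbb E f(\varepsilon)\|$ via the triangle inequality and Jensen, and compute $\mathbb E_{\mu\otimes\mu} d(\varepsilon,\varepsilon')\sim \alpha n$. The only differences are cosmetic (the order in which the chain of inequalities is assembled, and your slightly sharper constant $\|D_if\|\le D/2$ versus the paper's $\|D_if\|\le D$).
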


\begin{ex}
  Let 
  $
  \alpha_n
  $ be the bias parameter of the product measure $\mu$ as defined above such that  $\lim_{n\to \infty} n\alpha_n \to \infty$.  Suppose $f: \{-1,1\}^n \to (X, \|\cdot\|)$ has $\mu$-average distortion $D$. Then by Corollary \ref{cor:avg_nonembed} 
  \[
  D \gtrsim \left(n\alpha_n\right)^{1-\frac{1}{p}}  \to \infty.
  \]
   Thus, even for very sparse random vector distributions on the hypercube (in terms of the number of entries equal to 1), the distortion of $D$ must still grow with $n$ for any Banach space $X$ of nontrivial type, i.e.\ $p > 1$.

   In fact, this condition on $\alpha_n$ is optimal: take $(X,\|\cdot\|)$ to be $(\mathbb R^n, |\cdot |)$ and consider the identity map $f:\{-1,1\}^n \to \mathbb R^n$, which has Lipschitz constant 2. If $\alpha_n \sim \frac{1}{n}$, then 
\[
\mathbb E_{\mu \otimes \mu} |f(\varepsilon) - f(\varepsilon')| = 2 \mathbb E_{\mu \otimes \mu}[d(\varepsilon,\varepsilon')^\frac{1}{2}] \geq 2 \mathbb E_{\mu \otimes \mu}[\min\{d(\varepsilon,\varepsilon'),1\}].
%
\]
Notice that $\min\{d(\varepsilon,\varepsilon'),1\} = 0$ with probability $(1 - 2 \alpha_n(1-\alpha_n))^n$. Therefore, 
\[
\mathbb E_{\mu \otimes \mu} |f(\varepsilon) - f(\varepsilon')| \geq
2 \! \left(1 - (1 - 2 \alpha_n(1-\alpha_n))^n\right) \sim 1
\]
On the other hand, $\mathbb E[d(\varepsilon,\varepsilon')] = 2 n  \alpha_n (1  - \alpha_n) \sim 1$. Therefore, when $\alpha_n \lesssim \frac{1}{n}$, constant average distortion can be achieved. 
\end{ex}

\subsection{Pisier-like inequality for product Poisson measure}

Let $N = \{N_i\}_{i=1, \dots, m}$ be a vector of i.i.d.\ Poisson(1) random variables.

\begin{thm} \label{thm:poisson_inequality}
  For any Banach space $(X,\|\cdot\|)$, bounded function $f:\mathbb N^m \to X$, $1 \leq p < \infty$, and ,
  \[
\left(\mathbb E\|f(N) - \mathbb E f(N)\|^p\right)^{\frac{1}{p}} \leq
\int_0^\infty \!
\left(\mathbb E\!\left\|\sum_{i=1}^m \tilde\eta_i(t) D_i^{\mathbb{Z}} f(N)\right\|^p\right)^{\frac{1}{p}}  dt,
  \]
  where
  \[
D_i^{\mathbb{Z}} := f(x_1, \dots, x_i+1, \dots, x_m) - f(x_1, \dots, x_i, \dots, x_m)
  \]
  and
  \[
  \tilde\eta_i(t) := e^{-t}- \frac{e^{-t}}{(1-e^{-t})}\eta_i(t),
  \]
  for $\eta_i(t)$ independent $ \mathrm{Poisson}(1-e^{-t})$ random variables over all $i=1,\dots,m$, also independent of $N$.
\end{thm}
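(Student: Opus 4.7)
The plan is to deduce Theorem~\ref{thm:poisson_inequality} from the biased-hypercube inequality of Theorem~\ref{thm:biased_hypercube_pisier} by a Poisson scaling limit: apply Theorem~\ref{thm:biased_hypercube_pisier} with bias $\alpha_n := 1/n$ to a lifted function on a cube of length $mn$, and let $n \to \infty$.

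\emph{Setup.} Partition the coordinates of $\{-1,1\}^{mn}$ into $m$ blocks of size $n$, and set $S_j(\varepsilon) := \#\{k : \varepsilon_k^{(j)} = +1\}$. Given a bounded $f : \mathbb{N}^m \to X$, lift it to $F_n(\varepsilon) := f(S_1(\varepsilon), \ldots, S_m(\varepsilon))$ and apply Theorem~\ref{thm:biased_hypercube_pisier} under $\bigotimes_{j,k} \mu^{1/n}$. Under this measure the $S_j$'s are independent $\mathrm{Binomial}(n, 1/n)$, so the Poisson limit theorem gives $(S_1, \ldots, S_m) \xrightarrow{d} (N_1, \ldots, N_m)$ and, using boundedness of $f$, $\mathbb{E}\|F_n - \mathbb{E}F_n\|^p \to \mathbb{E}\|f(N) - \mathbb{E}f(N)\|^p$.

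\emph{Limit of the RHS.} A direct computation gives $D^{(j)}_k F_n(\varepsilon) = \tfrac{\varepsilon^{(j)}_k}{2}\, D^{\mathbb{Z}}_j f(S - \tfrac{1+\varepsilon^{(j)}_k}{2} e_j)$, so the $D_k$ operators collapse into the integer-difference $D^{\mathbb Z}_j f$. Classify the coordinates of block $j$ by the joint state $(\varepsilon_k^{(j)}, X_k^{(j)}(t)) \in \{\pm 1\}^2$, with counts $a_j, b_j, c_j, d_j$ for states $(+,+), (+,-), (-,+), (-,-)$. Expanding the explicit formula for $\delta^{(j)}_k(t)$ at $\alpha = 1/n$ shows that $\delta_k$ is $O(1)$ in three of the four states, but $\delta_k = -\tfrac{ne^{-t}}{2(1-e^{-t})}(1+o(1))$ in the rare state $(-,+)$; this $\Theta(n)$ blow-up is exactly balanced by the prefactor $4\alpha_n(1-\alpha_n) \sim 4/n$. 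Summing over $k$ per block, using $a_j + b_j = S_j$ and $d_j = n - S_j - c_j$, and using that (conditional on $S_j$) $c_j \sim \mathrm{Binomial}(n - S_j, \alpha_n(1-e^{-t})) \xrightarrow{d} \mathrm{Poisson}(1-e^{-t})$ independently of $S_j$, yields
\[
\frac{4}{n}\sum_{k=1}^n \delta^{(j)}_k(t)\, D^{(j)}_k F_n(\varepsilon) \xrightarrow[n \to \infty]{d}\; -\tilde\eta_j(t)\, D^{\mathbb Z}_j f(N),
\]
with $\tilde\eta_j(t) = e^{-t} - \tfrac{e^{-t}}{1-e^{-t}}\eta_j(t)$, the $\eta_j$'s i.i.d.\ $\mathrm{Poisson}(1-e^{-t})$ independent of $N$, and jointly independent across blocks (the overall sign is immaterial inside $\|\cdot\|$).

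\emph{Passage to the limit.} The main technical obstacle is interchanging $\lim_n$ with both the $L^p$ expectation (for fixed $t$) and the outer $t$-integral. At fixed $t$, the distributional convergence above combined with boundedness of $f$ (hence $\|D^{(j)}_k F_n\| \le \|f\|_\infty$) and the $L^{p+1}$ bound of Lemma~\ref{lem:eta_unif_bound} on $\delta^{(j)}_k(t)$ yields uniform integrability and hence $L^p$ convergence of moments. For the outer integral, bound $\|\sum_k \delta^{(j)}_k D^{(j)}_k F_n\| \le \|f\|_\infty \sum_k |\delta^{(j)}_k|$ and compute $\sum_k |\delta^{(j)}_k|$ in the four-state partition: the $O(n)$-valued $\delta$ in state $(-,+)$ is activated only by the $c_j \sim \mathrm{Poisson}(1-e^{-t})$ coordinates. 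After multiplication by $4\alpha_n(1-\alpha_n)$ and Minkowski summation over blocks, this produces an $n$-uniform dominating bound of the form $C_{f,m,p}\bigl(e^{-t}(1-e^{-t})^{1/p-1} + e^{-t}\bigr)$, which is integrable on $(0,\infty)$. Dominated convergence then completes the proof.
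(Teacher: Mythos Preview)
Your proof is correct and follows essentially the same route as the paper's: lift $f$ to the $mn$-dimensional cube with bias $\alpha_n=1/n$, apply Theorem~\ref{thm:biased_hypercube_pisier}, analyze $\sum_k \delta_k D_k$ via a case decomposition on $(\varepsilon_k, X_k(t))$, and pass to the limit using the Poisson approximation of Lemma~\ref{lem:binomial_limit} for the relevant counts. Your four-state partition is just a finer version of the paper's two-step split (first on $\varepsilon_{ij}$, then on $X_{ij}(t)$ for the case $\varepsilon_{ij}=-1$), and your explicit handling of uniform integrability and the $t$-integrable dominating bound supplies details that the paper compresses into the sentence ``using the assumption that $f$ is bounded, we achieve the desired result by taking $n\to\infty$.''
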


\begin{cor}\label{cor:poisson_type_ineq}
  For any Banach space $(X,\|\cdot\|)$ of Rademacher type $p \in [1,2]$ with optimal constant $T_p(X)$, bounded function $f:\mathbb N^m \to X$,
  \[
\left(\mathbb E\|f(N) - \mathbb E f(N)\|^p\right)^{\frac{1}{p}}\leq 4\,  T_p(X)
\left(\sum_{i=1}^m \mathbb E\!
\left\|D_i^{\mathbb{Z}} f(N) \right\|^p\right)^{\frac{1}{p}}.
  \]
\end{cor}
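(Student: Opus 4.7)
The plan is to derive Corollary \ref{cor:poisson_type_ineq} from Theorem \ref{thm:poisson_inequality} by the same symmetrization/Rademacher-type argument that underlies the passage from Theorem \ref{thm:biased_hypercube_pisier} to Corollary \ref{cor:biased_hypercube_type}, with the hypercube moment bound of Lemma \ref{lem:eta_unif_bound} replaced by a direct variance computation for the Poisson coefficients $\tilde\eta_i(t)$.

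First, I would fix $t > 0$ and note that the $\tilde\eta_i(t)$ are i.i.d.\ in $i$, mean zero (since $\mathbb{E}[\eta_i(t)] = 1 - e^{-t}$), and independent of $N$. Conditioning on $N$, the sum inside the integrand of Theorem \ref{thm:poisson_inequality} is then a sum of independent mean-zero Banach-valued random variables, so the standard symmetrization inequality gives
\[
\mathbb E\Big\|\sum_i \tilde\eta_i(t)\, D_i^{\mathbb{Z}} f(N)\Big\|^p \;\leq\; 2^p\, \mathbb E\Big\|\sum_i \epsilon_i\, \tilde\eta_i(t)\, D_i^{\mathbb{Z}} f(N)\Big\|^p,
\]
where $\epsilon_1,\dots,\epsilon_m$ are i.i.d.\ Rademachers independent of everything else. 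Conditioning next on $(\tilde\eta(t), N)$ and invoking the definition of Rademacher type $p$, and then using independence of $\tilde\eta(t)$ from $N$ together with the fact that the $\tilde\eta_i(t)$ are identically distributed in $i$, yields
\[
\mathbb E\Big\|\sum_i \tilde\eta_i(t)\, D_i^{\mathbb{Z}} f(N)\Big\|^p \;\leq\; (2T_p(X))^p \,\mathbb E\big[|\tilde\eta_1(t)|^p\big]\, \sum_i \mathbb E\|D_i^{\mathbb{Z}} f(N)\|^p.
\]

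Taking $p$-th roots and inserting this bound into Theorem \ref{thm:poisson_inequality} reduces the corollary to showing the scalar estimate
\[
\int_0^\infty \mathbb E\big[|\tilde\eta_1(t)|^p\big]^{1/p}\, dt \;\leq\; 2.
\]
Since $p \in [1,2]$, the power mean inequality gives $\mathbb E[|\tilde\eta_1(t)|^p]^{1/p} \leq \mathbb E[\tilde\eta_1(t)^2]^{1/2}$, so it suffices to handle $p = 2$. A direct variance computation, using $\mathrm{Var}(\eta_1(t)) = 1 - e^{-t}$, gives $\mathbb E[\tilde\eta_1(t)^2] = e^{-2t}/(1-e^{-t})$, and the substitution $u = 1 - e^{-t}$ yields
\[
\int_0^\infty \frac{e^{-t}}{\sqrt{1 - e^{-t}}}\, dt \;=\; \int_0^1 \frac{du}{\sqrt{u}} \;=\; 2.
\]

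I do not anticipate a genuine obstacle: the argument is structurally identical to the corresponding reduction on the biased cube, and the only care needed is to track the factor of $2$ from symmetrization together with the factor of $2$ from the scalar integral, which multiply to give the constant $4\, T_p(X)$ in the corollary.
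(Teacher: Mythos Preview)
Your proposal is correct and follows essentially the same approach as the paper's own proof: symmetrize using $\mathbb E[\tilde\eta_i(t)]=0$, apply Rademacher type, exploit the independence of $\tilde\eta_i(t)$ from $N$ to factor out the scalar moment, bound $(\mathbb E|\tilde\eta_1(t)|^p)^{1/p}\le(\mathbb E|\tilde\eta_1(t)|^2)^{1/2}=e^{-t}/\sqrt{1-e^{-t}}$, and integrate to obtain the factor of $2$. The constant tracking and the variance computation match the paper exactly.
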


\section{Proofs}

The following lemma makes explicit the role of $\delta(t)$ as a discrete partial derivative of the semigroup.
We denote $\mathbb E_x[\,  \cdot \, ] := \mathbb E[ \,\,\cdot \, \mid X(0) = x]$ for any fixed $x \in \{-1,1\}^n$.

\begin{lem} \label{lem:Pt}
  
We have for $t>0$
\begin{align*}
  D_i P_t f(x) &= \mathbb E_x[\delta_i(t)f(X(t))].
\end{align*}

\end{lem}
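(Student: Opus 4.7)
The plan is to prove this lemma by unwinding the definitions, since the quantity $\delta_i(t)$ has been set up precisely so that multiplying it by $p_t(\varepsilon,X(t))$ returns the partial $D_i$ of the heat kernel. In other words, $\delta_i(t)$ is a logarithmic derivative, and the identity is essentially a restatement of $\bigl(D_i p_t\bigr) = \bigl(D_i p_t / p_t\bigr)\cdot p_t$ integrated against $f$.

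First I would write $P_t f$ as a sum against the heat kernel, namely
\[
P_t f(x) = \mathbb{E}_x\bigl[f(X(t))\bigr] = \sum_{y\in\{-1,1\}^n} p_t(x,y)\,f(y).
\]
Next I would apply $D_i$. Since $D_i$ acts only on the first (i.e.\ $x$) argument and commutes with the finite sum over $y$, this yields
\[
D_i P_t f(x) = \sum_{y\in\{-1,1\}^n} \bigl(D_i p_t(\cdot,y)\bigr)(x)\,f(y).
\]
Then I would multiply and divide by $p_t(x,y)$ inside the sum (which is valid because the explicit formula for $p_t(x_i,y_i)$ shows $p_t(x,y) > 0$ for all $x,y$ when $\alpha\in(0,1)$ and $t>0$), obtaining
\[
D_i P_t f(x) = \sum_{y\in\{-1,1\}^n} \frac{D_i p_t(x,y)}{p_t(x,y)}\,f(y)\,p_t(x,y).
\]

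Finally I would recognize that by the very definition $\delta_i(t) = D_i p_t(\varepsilon,X(t))/p_t(\varepsilon,X(t))$, the value of the random variable $\delta_i(t)$ conditional on $X(0)=x$ and $X(t)=y$ is exactly $D_i p_t(x,y)/p_t(x,y)$. Interpreting the sum above as an expectation over the law of $X(t)$ given $X(0)=x$, this gives $D_i P_t f(x) = \mathbb{E}_x[\delta_i(t) f(X(t))]$, as desired.

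I do not anticipate a real obstacle: this is a bookkeeping lemma whose content is entirely in the definition of $\delta_i(t)$. The only point requiring mild care is checking that $p_t(x,y)$ does not vanish so that division is legitimate, which is immediate from the explicit expression for $p_t$ given just before the lemma.
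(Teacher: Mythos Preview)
Your proof is correct and follows essentially the same route as the paper: both write $P_tf$ as a sum against the heat kernel, apply $D_i$ in the $x$ variable, then factor out $p_t(x,y)$ to recognize the resulting sum as $\mathbb{E}_x[\delta_i(t)f(X(t))]$ via the definition of $\delta_i(t)$ as a logarithmic derivative. The paper makes the product structure $p_t(x,y)=\prod_j p_t(x_j,y_j)$ explicit when computing $D_i p_t$, whereas you leave this implicit, but the argument is the same.
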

\begin{proof}

We have that by definition 
\[
  D_i P_t f(x) = \frac{1}{2}(P_t f(x) - P_t f(x_1, \dots, x_{i-1},-x_i, x_{i+1}, \dots, x_n)).
\]
By rewriting $P_t$ in terms of the heat kernel, we obtain
\begin{align*}
  D_i P_t f(x) &= \frac{1}{2} \left( \sum_{y\in \{-1,1\}^n}\! \left(\prod_{j=1}^n p_t(x_j,y_j) - p_t(-x_i,y_i)\prod_{\substack{j=1, \\ j \neq i\phantom{,}}}^n p_t(x_j,y_j)\right) f(y)  \right) \\
  &= \frac{1}{2} \left( \sum_{y\in \{-1,1\}^n} \! \!\left(1  - \frac{p_t(-x_i,y_i)}{p_t(x_i,y_i)}\right)  \prod_{j=1}^n p_t(x_j,y_j) f(y)  \right) \\
  &= \mathbb E_x \!\left[ \frac{1}{2} \left(1  - \frac{p_t(-x_i,X_i(t))}{p_t(x_i,X_i(t))}\right) f(X(t))\right].
\end{align*}
Observe that 
\[
   \frac{1}{2} \left(1  - \frac{p_t(-x_i,X_i(t))}{p_t(x,X(t))}\right)  = \frac{D_i \, p_t(x,X(t))}{p_t(x,X(t))},
\]
which is precisely the definition of $\delta_i(t)$. 
\end{proof}

\subsection{Proofs of Theorem \ref{thm:biased_hypercube_pisier} and Corollary \ref{cor:biased_hypercube_type}}

The following proof follows closely the proof of Theorem 1.4 in \cite{Ivanisvili2020} except for the key step of using the Dirichlet form derived in Lemma \ref{lem:dirichlet}

\begin{proof}[Proof of Theorem \ref{thm:biased_hypercube_pisier}]

  By  Proposition 1.3.1 of \cite{hytonen2016analysis}
  \[
  \left(\mathbb E\|f(\varepsilon) - \mathbb E f(\varepsilon)\|^p\right)^{\frac{1}{p}} = \sup_{\mathbb E \|g(\varepsilon)\|^{p'} \leq 1} \mathbb E[\langle g(\varepsilon),f(\varepsilon) - \mathbb E f(\varepsilon)\rangle]
  \]
  where $\frac{1}{p} + \frac{1}{p'} =1$.

  By the fundamental theorem of calculus, and using the stationarity of the Markov semigroup $P_t$, which implies that $P_0f = f$ and $\lim_{t\to\infty}P_tf = \mathbb E f(\varepsilon)$, we obtain
  \begin{align*}
  \mathbb E[\langle g(\varepsilon),f(\varepsilon) - \mathbb E f(\varepsilon)\rangle]
  &= - \int_0^\infty \mathbb E\!\left[\langle g(\varepsilon),\frac{d}{dt}P_tf(\varepsilon)\rangle\right] \, dt\\
  &= - \int_0^\infty \mathbb E\left[\langle g(\varepsilon),\Delta P_tf(\varepsilon)\rangle\right] \, dt\\
  &= \int_0^\infty \sum_{i=1}^n 4 \alpha (1 - \alpha)\mathbb E\!\left[\langle D_i g(\varepsilon),D_i P_tf(\varepsilon)\rangle\right] \, dt,
  \end{align*}
  where in the second to last line we have used the key representation of the Dirichlet form derived in Lemma \ref{lem:dirichlet}.
  By Lemma \ref{lem:Pt}, we have that
  \begin{align*}
    \mathbb E\!\left[\langle D_i g(\varepsilon),D_i P_t f(\varepsilon)\rangle\right] 
    &=\mathbb E\!\left[\langle  g(X(t)),\delta_i(t)D_i f(\varepsilon)\rangle\right] .
  \end{align*}
Therefore,
\begin{align*}
\mathbb E[\langle g(\varepsilon),f(\varepsilon) - \mathbb E f(\varepsilon)\rangle]
&= \int_0^\infty  4 \alpha (1 - \alpha)\mathbb E\!\left[\langle  g(X(t)), \sum_{i=1}^n\delta_i(t)D_i f(\varepsilon)\rangle\right] dt \\
&\leq
\int_0^\infty  4 \alpha (1 - \alpha)\left(\mathbb E\| g(X(t))\|^{p'}\right)^{\frac{1}{p}'}\!
\left(\mathbb E\left\|\sum_{i=1}^n\delta_i(t)D_i f(\varepsilon)\right\|^p\right)^{\frac{1}{p}} dt.
\end{align*}
By the reversibility of $P_t$, $X(t)$ is equal in distribution to $\varepsilon$ so that we obtain:
\[
\left(\mathbb E\|f(\varepsilon) - \mathbb E f(\varepsilon)\|^p\right)^{\frac{1}{p}}
\leq
\int_0^\infty  4 \alpha (1 - \alpha)
\left(\mathbb E\!\left\|\sum_{i=1}^n\delta_i(t)D_i f(\varepsilon)\right\|^p\right)^{\frac{1}{p}} dt.
\]
\end{proof}

The proof of Corollary \ref{cor:biased_hypercube_type} requires an estimate on the small moments of $\delta(t)$ which is captured in Lemma \ref{lem:eta_unif_bound}. It is particularly important that we obtain an upper bound which is uniform over all initial conditions of the underlying Markov process. 
We postpone the proof of this lemma until after the proof of Corollary \ref{cor:biased_hypercube_type}.

\begin{proof}[Proof of Corollary \ref{cor:biased_hypercube_type}]
 We begin by performing a routine symmetrization argument on the expectation on the right hand side of Theorem \ref{thm:biased_hypercube_pisier}. 
  Recall that 
  \[
\mathbb E \big[\, \delta_i(t) \mid X(0) = \varepsilon\big] = 0.
\]

  Let  $\zeta_1, \dots, \zeta_n$ be i.i.d.\ \textit{unbiased} Rademacher random variables independent of all other randomness. Then by Lemma 7.3 of \cite{van2014probability}, applied conditionally on $\varepsilon$, we have that 
  \begin{align*}
    \mathbb E\!\left\|\sum_{i=1}^n\delta_i(t)D_i f (\varepsilon) \right\|^p 
    \leq 2^p \mathbb E\!\left\|\sum_{i=1}^n \zeta_i\delta_i(t)D_i f (\varepsilon) \right\|^p
  \end{align*}
Then by the definition of Rademacher type and our assumption on $(X, \|\cdot\|)$, applied conditionally on $\delta(t)$ and $\varepsilon$,
\[
  \mathbb E\!\left\|\sum_{i=1}^n\delta_i(t)D_i f (\varepsilon) \right\|^p  
  \leq T_p(X)^p2^p\sum_{i=1}^n\mathbb E\!
  \left\|\delta_i(t)D_i f (\varepsilon)\right\|^p.
\]
Now applying the uniform bound on $\mathbb E_x |\delta_i(t)|^p$ obtained in Lemma \ref{lem:eta_unif_bound}, conditionally on $\varepsilon$, we obtain
\[
\mathbb E\!
  \left\|\delta_i(t)D_i f (\varepsilon)\right\|^p \leq (2\alpha )^{1-p} e^{-tp}(1 - e^{-t})^{1-p}\mathbb E\!
  \left\|D_i f (\varepsilon)\right\|^p
\]
Now by Theorem \ref{thm:biased_hypercube_pisier}, we have that 
\begin{align*}
  \left(\mathbb E\|f(\varepsilon) - \mathbb E f(\varepsilon)\|^p\right)^{\frac{1}{p}}
  &\leq T_p(X) 8 \alpha (1 - \alpha) \left( \sum_{i=1}^n\mathbb E\!
  \left\|D_i f(\varepsilon)\right\|^p\right)^{\frac{1}{p}} \int_0^\infty  
   (2\alpha )^{\frac{1}{p} - 1} e^{-t}(1 - e^{-t})^{\frac{1}{p} - 1} \, dt .
\end{align*}
Evaluating the  integral by a change of variables, we have that 
\[
\int_0^\infty  (2\alpha )^{\frac{1}{p} - 1} e^{-t}(1 - e^{-t})^{\frac{1}{p} - 1} \, dt = (2\alpha )^{\frac{1}{p} - 1} \int_0^1 u^{\frac{1}{p} - 1} \, du = p(2\alpha )^{\frac{1}{p} - 1},
\]
which yields the final result up to universal constants, using the assumption that $\alpha < \frac{1}{2}$ and the fact that $p \in [1,2]$.
\end{proof}

\begin{proof}[Proof of Lemma \ref{lem:eta_unif_bound}]
  Fix any $x \in \{-1,1\}^n$.
By definition of $\delta(t)$,
\begin{align*}
  \mathbb E_x \Big[\big|\delta_i(t)\big|^p\,\Big] = 
 \sum_{y \in \{-1,1\}} \left|\frac{e^{-t}}{2p_t(x_i,y)}\right|^p p_t(x_i,y) .
\end{align*}
Explicitly, 
\[
  \mathbb E_x \Big[\big|\delta_i(t)\big|^p\,\Big] = \frac{e^{-tp}}{2} \big(|(1-2\alpha+x_i)e^{-t} + 2\alpha|^{1-p} +
  |(2\alpha-1-x_i)e^{-t} + 2-2\alpha|^{1-p}\big).
\]
Denote
\begin{align*}
f_1(x_i,\alpha,t) &:= (1-2\alpha+x_i)e^{-t} + 2\alpha \\
f_2(x_i,\alpha,t) &:= (2\alpha-1-x_i)e^{-t} + 2-2\alpha.
\end{align*}
Notice that by construction $f_1,f_2 \geq 0$, since they arise from the heat kernel $p_t$.
Through a case-by-case analysis based on the initial condition $x$, we lower bound the terms $f_1$ and $f_2$ in order to obtain an upper bound on $\mathbb E |\delta_i(t)|^p$.
\noindent\hspace{-0.5in}
\begin{enumerate}[wide=0pt,,label={\sc {Case} \Roman*:}]
 \item  $x_i = -1$. We
 have that 
\[
  f_2(-1,\alpha,t) = 2\alpha e^{-t} + 2-2\alpha.
\]
Therefore, using $\alpha < \frac{1}{2}$,
 \begin{align*}
  f_1(-1,\alpha,t)
 = 2\alpha (1 - e^{-t})  \leq 2\alpha e^{-t} + 2-2\alpha  = f_2(-1,\alpha,t)
 \end{align*}
  for all $t \geq 0$. Thus,
   \[
    \mathbb E \Big[\big|\delta_i(t)\big|^p\,\Big| \, X_i(0) = -1\Big] \leq e^{-tp} f_1(-1,\alpha,t)^{1-p} = e^{-tp}  (2\alpha)^{1-p}(1 - e^{-t})^{1-p},
   \]
   using that $2\max\{a,b\} \geq a + b$. 
   \item $x_i = + 1$.
   We have that
   \begin{align*}
    f_1(1,\alpha,t) &= (2-2\alpha)e^{-t} + 2\alpha  \geq 2\alpha (1 - e^{-t}).
   \end{align*}
   Also, using the assumption that $\alpha < \frac{1}{2}$, we have that 
   \begin{align*}
    f_2(1,\alpha,t) &= (2\alpha-2)e^{-t} + 2-2\alpha = (2-2\alpha)(1-e^{-t}) \geq 2\alpha (1 - e^{-t}).
   \end{align*}

\end{enumerate}
Combining the analysis of the two cases, we obtain that 
\begin{align*}
  \mathbb E \Big[\big|\delta_i(t)\big|^p\,\Big| \, X_i(0) = x_i\Big] \leq e^{-tp} (2\alpha )^{1-p}(1 - e^{-t})^{1-p}
\end{align*}
uniformly in the value of $x$.
\end{proof}

\subsection{Proof of Corollary \ref{cor:avg_nonembed}}

\begin{proof}[Proof of Corollary \ref{cor:avg_nonembed}]
  Observe that since $f$ is $D$-Lipschitz, we have that 
  \[
  \|D_if\| \leq D.
  \]
  Therefore, we have by Corollary \ref{cor:biased_hypercube_type} that 
  \[
  T_p(X)(\alpha n)^\frac{1}{p} D \gtrsim  \left(\mathbb E\|f(\varepsilon) - \mathbb E f(\varepsilon)\|^p\right)^{\frac{1}{p}}.
  \]
  By Jensen's inequality,
  \[
    \left(\mathbb E\|f(\varepsilon) - \mathbb E f(\varepsilon)\|^p\right)^{\frac{1}{p}} \geq  \mathbb E\|f(\varepsilon) - \mathbb E f(\varepsilon)\|.
  \]
  Let $\varepsilon'$ be an independent identically distributed copy of $\varepsilon$.
Notice that 
  \[
    \mathbb E\|f(\varepsilon) - \mathbb E f(\varepsilon)\| = \frac{1}{2} \mathbb E\|f(\varepsilon) - \mathbb E f(\varepsilon)\|
    +    \frac{1}{2} \mathbb E\|f(\varepsilon') - \mathbb E f(\varepsilon')\|
    \geq \frac{1}{2}\mathbb E\|f(\varepsilon) -  f(\varepsilon')\|.
  \]
  By the assumption that $f$ has $\mu$-average distortion $D$, we have that 
  \[
    \mathbb E\|f(\varepsilon) -  f(\varepsilon')\| \geq \mathbb E |d(\varepsilon,\varepsilon')| \sim \alpha n,
  \]
  which concludes the proof.
\end{proof}

\subsection{Proofs of Theorem \ref{thm:poisson_inequality} and Corollary \ref{cor:poisson_type_ineq}}

The main idea of this section is to obtain the concentration inequality for Poisson random variables as the limit of appropriately scaled biased Rademachers. This would not be possible without the optimal dependence on $\alpha$ obtained in Theorem \ref{thm:biased_hypercube_pisier}. 
We will work on the $m \times n$-dimensional discrete hypercube $\{-1,1\}^{m \times n}$, where we fix $m$ and send $n$ to infinity. The following definitions are analogous to those given in Section \ref{sec:biased_Hypercube_preliminaries}, but with more convenient indexing for this setting.

We  consider the biased product measure on $\{-1,1\}^{m \times n}$ with parameter $\alpha  = \frac{1}{n}$,
\[
\mu = \bigotimes_{i=1}^m \bigotimes_{j=1}^n \mu_{ij}
\]
where 
\[
\mu_{ij}(+1) = \frac{1}{n} \qquad \text{and} \qquad  \mu_{ij}(-1) = 1-\frac{1}{n}.
\]

We will consider the heat semigroup $P_t$ associated with the random walk $X(t) = \{X_{ij}(t)\}_{i = 1, \dots, m; \, j = 1, \dots, n}$ on the discrete cube with bias $\alpha = \frac{1}{n}$. 

We denote
\begin{align*}
D_{ij} f (x) &:= \frac{1}{2}(f(x) - f(x_{11}, \dots, x_{i-1 \, j},-x_{ij},\, x_{i+1 \, j}\dots, x_{mn})),
\end{align*}
where we take the difference with respect to a single coordinate $x_{ij}$. 
Let $\varepsilon$ be a $\mu$-distributed random vector and denote as $Y \in \{0,1\}^{m \times n}$ the rescaled version of $\varepsilon$ given by
\[
Y_{ij} := \frac{\varepsilon_{ij} + 1}{2} \in \{0,1\}.
\]
We will apply our results on the biased hypercube to a ``structured'' function $g$ of the form $g:\{-1,1\}^{m\times n} \to X$ given by 
 \begin{equation}\label{eq:g_form}
   g(\varepsilon_{11}, \dots, \varepsilon_{1n}, \dots,\varepsilon_{mn})
   = f\!\left(\sum_{j=1}^n Y_{1j}, \sum_{j=1}^n Y_{2j}, \, \dots,  \sum_{j=1}^n Y_{mj} \right),
 \end{equation}
 for some function $f:\mathbb N^m \to X$.
 The following observation will be useful.

\begin{lem} \label{lem:}
  The discrete partial derivative of $g$ can be written as 
  \begin{align*}
    D_{ij}g(\varepsilon)
    &= \frac{1}{2}\left(
    f\!\left(\dots, \sum_{j'=1}^n Y_{ij'}, \, \dots \right) -
    f\!\left(\dots,  - \varepsilon_{ij}+ \sum_{j'=1}^n Y_{ij'}, \, \dots  \right) \right).
  \end{align*}
\end{lem}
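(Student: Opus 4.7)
The plan is to unfold the definition of $D_{ij}g(\varepsilon)$ and observe that flipping a single coordinate $\varepsilon_{ij}$ produces an explicit translation of the $i$-th argument of $f$. Concretely, I first write
\[
D_{ij}g(\varepsilon) = \tfrac{1}{2}\bigl(g(\varepsilon) - g(\varepsilon^{(ij)})\bigr),
\]
where $\varepsilon^{(ij)}$ denotes the vector obtained from $\varepsilon$ by replacing the entry in position $(i,j)$ by its negative. Because $g$ is defined in terms of $Y$ through \eqref{eq:g_form}, and the map $\varepsilon_{ij} \mapsto Y_{ij} = (\varepsilon_{ij}+1)/2$ affects only the $i$-th argument of $f$ (the sum over index $j'$ in row $i$), all arguments of $f$ other than the $i$-th are identical for $\varepsilon$ and $\varepsilon^{(ij)}$.

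Next, I compute the effect on the $i$-th argument. Writing $Y^{(ij)}_{ij} = (-\varepsilon_{ij}+1)/2$ for the flipped coordinate, I have
\[
\sum_{j'=1}^n Y^{(ij)}_{ij'} \;=\; \sum_{\substack{j'=1 \\ j'\neq j}}^n Y_{ij'} + \frac{1-\varepsilon_{ij}}{2}
\;=\; \sum_{j'=1}^n Y_{ij'} - \varepsilon_{ij},
\]
where the last step uses $\tfrac{1-\varepsilon_{ij}}{2} - \tfrac{1+\varepsilon_{ij}}{2} = -\varepsilon_{ij}$. Substituting this into $g(\varepsilon^{(ij)})$ and leaving the remaining arguments unchanged gives exactly the formula claimed in the lemma.

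There is no real obstacle here; the content of the lemma is purely a bookkeeping identity, and the only point worth highlighting is the sign computation $(1-\varepsilon_{ij})/2 - (1+\varepsilon_{ij})/2 = -\varepsilon_{ij}$, which turns a single sign flip in the $\pm 1$-coordinates into an additive translation by $-\varepsilon_{ij}$ in the $\{0,1\}$-coordinates summed by $f$. This is precisely what will later permit the passage to a Poisson limit, since the one-step increment in the $i$-th argument of $f$ matches the increment in $D_i^{\mathbb{Z}}$ up to the sign carried by $\varepsilon_{ij}$.
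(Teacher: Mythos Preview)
Your proposal is correct and follows essentially the same approach as the paper: unfold the definition of $D_{ij}g$, observe that only the $i$-th argument of $f$ changes, and verify the identity $\frac{1-\varepsilon_{ij}}{2}+\sum_{j'\ne j}Y_{ij'}=-\varepsilon_{ij}+\sum_{j'}Y_{ij'}$. The paper performs the same algebra by adding and subtracting $\varepsilon_{ij}$, whereas you phrase it as the difference $\tfrac{1-\varepsilon_{ij}}{2}-\tfrac{1+\varepsilon_{ij}}{2}=-\varepsilon_{ij}$; these are the same computation.
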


\begin{proof}
  Using the definition of $g$ given in \eqref{eq:g_form}, we have that
  \begin{align*}
    D_{ij}g(\varepsilon)
    &= \frac{1}{2}\left(
    f\!\left(\dots, \sum_{j'=1}^n Y_{ij'}, \, \dots \right) -
    f\!\left(\dots, \frac{-\varepsilon_{ij} + 1}{2} + \sum_{\substack{j'=1 \\ j' \neq j}}^n Y_{ij'}, \, \dots  \right) \right).
  \end{align*}
  Noticing that 
  \[
  \frac{-\varepsilon_{ij} + 1}{2} + \sum_{\substack{j'=1 \\ j' \neq j}}^n Y_{ij'} = \frac{-\varepsilon_{ij} + 1}{2} + \varepsilon_{ij} - \varepsilon_{ij}+ \sum_{\substack{j'=1 \\ j' \neq j}}^n Y_{ij'} = - \varepsilon_{ij}+ \sum_{j'=1}^n Y_{ij'}
  \]
  completes the proof.
\end{proof}

We will denote the heat kernel of $P_t$ as $p^{(n)}_t( \cdot , \cdot )$ to emphasize the dependence on $n$ in the bias parameter. Explicitly, for any $x_{ij},y_{ij} \in \{-1,1\}$,
\begin{align*}
  p^{(n)}_t(x_{ij},y_{ij}) &= \frac{1-e^{-t}}{2} \left(\frac2n-1\right)y_{ij} +  \frac{1+e^{-t}y_{ij}x_{ij}}{2}.
\end{align*}
Let us denote 
\[
\delta_{ij}(t) :=  \frac{e^{-t}X_{ij}(t)\varepsilon_{ij}}{(1-e^{-t})(\frac{2}{n}-1)X_{ij}(t)+1 + e^{-t}\varepsilon_{ij}X_{ij}(t)}.
\]
where $ \varepsilon = X(0) \in \{-1,1\}^{m\times n}$.

\begin{lem}\label{lem:binomial_limit}
  Conditional on the event that $\sum_{j'=1}^n Y_{ij'} = k$, define independent binomial random variables 
  \[
  B^{(n)}_i \sim \mathrm{Binomial}\left(n-k,p^{(n)}_t(-1,1)\right), \quad i = 1, \dots, m.
  \]
  Then for $i = 1, \dots, m$, we have the following joint convergence in distribution,
  \[
  \left(\sum_{j'=1}^n Y_{ij'},B^{(n)}_i\right) \overset{d}\longrightarrow
  (N_i,\eta_i(t)) \quad \text{ as } n \to \infty,
  \]
  where $N_i \sim \mathrm{Poisson}(1)$ and $\eta_i(t) \sim \mathrm{Poisson}(1-e^{-t})$, with $N_i$ independent of $\eta_i(t))$ and all random variables independent over $i$.
\end{lem}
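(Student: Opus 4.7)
The plan is to prove joint convergence in distribution via pointwise convergence of the joint probability mass functions. Since all random variables involved are $\mathbb{N}$-valued, pointwise convergence of PMFs (equivalently, Scheffé's lemma) immediately yields convergence in distribution. The argument is essentially a double application of the Poisson-approximation-to-binomial, in which the factorization of the limiting joint PMF automatically produces independence of $N_i$ and $\eta_i(t)$ in the limit.

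First I would compute the transition probability $p_t^{(n)}(-1,1)$ explicitly from the heat kernel formula. Substituting $x_{ij}=-1$, $y_{ij}=1$ and simplifying, the two terms combine to give
\[
p_t^{(n)}(-1,1) = \frac{1-e^{-t}}{2}\!\left(\frac{2}{n}-1\right) + \frac{1-e^{-t}}{2} = \frac{1-e^{-t}}{n}.
\]
Next, for a fixed row index $i$, since the $Y_{ij'}$ are i.i.d.\ Bernoulli$(1/n)$ and, conditional on $\sum_{j'} Y_{ij'}=k$, one has $B_i^{(n)} \sim \mathrm{Binomial}(n-k, (1-e^{-t})/n)$, the joint PMF factors as
\[
\mathbb{P}\!\left(\sum_{j'=1}^n Y_{ij'} = k,\; B_i^{(n)}=\ell\right) = \binom{n}{k}\frac{1}{n^k}\!\left(1-\frac{1}{n}\right)^{\!n-k}\!\binom{n-k}{\ell}\!\left(\frac{1-e^{-t}}{n}\right)^{\!\ell}\!\left(1-\frac{1-e^{-t}}{n}\right)^{\!n-k-\ell}.
\]

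Now I would take $n \to \infty$ with $k,\ell$ held fixed. Using $\binom{n}{k}n^{-k} \to 1/k!$, $\binom{n-k}{\ell}n^{-\ell} \to 1/\ell!$, and $(1-c/n)^{n} \to e^{-c}$, every factor converges, and the product tends to
\[
\frac{e^{-1}}{k!} \cdot \frac{e^{-(1-e^{-t})}(1-e^{-t})^\ell}{\ell!},
\]
which is precisely the joint PMF of independent $\mathrm{Poisson}(1)$ and $\mathrm{Poisson}(1-e^{-t})$ random variables. The fact that the limit factors as a product yields the claimed asymptotic independence of $N_i$ and $\eta_i(t)$ for each $i$.

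Finally, joint convergence over $i=1,\dots,m$ follows from independence across rows: the coordinates of $Y$ are independent across the rectangular index set, and the auxiliary binomials $B_i^{(n)}$ are constructed to be conditionally independent across $i$ given $Y$. Hence the pairs $\big(\sum_{j'} Y_{ij'}, B_i^{(n)}\big)$ are independent in $i$ for every $n$, and mutual independence is preserved under the product limit. The main obstacle, such as it is, is purely bookkeeping: correctly identifying the heat kernel scaling $p_t^{(n)}(-1,1)=(1-e^{-t})/n$, which is exactly what makes the Bernoulli success probability $(1-e^{-t})/n$ produce a nontrivial Poisson limit and thereby ties the Rademacher-cube dynamics to the Poisson semigroup.
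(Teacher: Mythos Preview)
Your proposal is correct and follows essentially the same approach as the paper: both arguments identify $p_t^{(n)}(-1,1)=(1-e^{-t})/n$ and then invoke the Poisson limit of binomial distributions (the ``law of rare events''). Your version is more explicit, working directly with the joint PMF and passing to the limit pointwise, whereas the paper simply asserts the marginal and conditional Poisson limits; but the underlying idea is identical.
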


\begin{proof}
  By the law of rare events, for each $i=1, \dots, m$,
  \[
  \sum_{j'=1}^n Y_{ij'} \overset{d}\longrightarrow N_i \text{ as } n \to \infty.
  \]
  Notice that
  \[
n \cdot p^{(n)}_t(-1,1) = 1 - e^{-t} ,
  \]
  so that, conditional on $\sum_{j'=1}^n Y_{ij'}$, by the law of rare events,
  \[
B^{(n)}_i \overset{d}\longrightarrow \eta_i(t) \text{ as } n \to \infty,
  \]
  independent of $N_i$.
\end{proof}

Now we have the necessary elements for the proof of the main theorem in the Poisson setting. 

\begin{proof}[Proof of Theorem \ref{thm:poisson_inequality}]
  Let $g$ be a function of the form given in \eqref{eq:g_form}.
  By Theorem  \ref{thm:biased_hypercube_pisier}, we have that
  \begin{equation}\label{eq:cor2.1prelimit}
    \left(\mathbb E\|g(\varepsilon) - \mathbb E g(\varepsilon)\|^p\right)^{\frac{1}{p}}
    \leq 4 \left(1 - \frac{1}{n}\right)
    \int_0^\infty  
    \left(\mathbb E\!\left\|\frac{1}{n}\sum_{i=1}^m \sum_{j=1}^n\delta_{ij}(t)D_{ij} g(\varepsilon)\right\|^p\right)^{\frac{1}{p}} dt.
  \end{equation}
  We decompose the sum based on the value of $\varepsilon_{ij}$ as follows
  \begin{align}
  \frac{1}{n} \sum_{i=1}^m \sum_{j=1}^n\delta_{ij}(t)D_{ij} g(\varepsilon)
  &= \frac{1}{n}\sum_{i=1}^m \sum_{j=1}^n \mathds{1}_{\{\varepsilon_{ij} = 1\}} \delta_{ij}(t)D_{ij} g(\varepsilon)  \notag\\
  & \qquad +
  \frac{1}{n}\sum_{i=1}^m \sum_{j=1}^n \mathds{1}_{\{\varepsilon_{ij} = -1\}} \delta_{ij}(t)D_{ij} g(\varepsilon). \label{eq:sum_decomposision}
  \end{align}



  Also, for all $\varepsilon$,
  \[
  \|D_{ij} g(\varepsilon)\| \leq 2\|g\|_\infty =  2\|f\|_\infty,
  \]
  which is finite by assumption.
  Therefore, by the triangle inequality,
  \begin{align*}
    \lim_{n\to\infty}
   \left\|  \frac{1}{n} \sum_{i=1}^m \sum_{j=1}^n \mathds{1}_{\{\varepsilon_{ij} = 1\}} \delta_{ij}(t)D_{ij} g(\varepsilon) 
   \right\| \leq  2\|f\|_\infty \lim_{n\to\infty}
    \frac{1}{n} \sum_{i=1}^m \sum_{j=1}^n \mathds{1}_{\{\varepsilon_{ij} = 1\}} |\delta_{ij}(t)| = 0, 
  \end{align*}
  where by using the explicit expression of $\delta_{ij}(t)$ conditioned on $\varepsilon_{ij} = 1$, we see that 
  \[
  \mathds{1}_{\{\varepsilon_{ij} = 1\}} |\delta_{ij}(t)| \leq \max \left\{\left|\frac{e^{-t}}{\frac{2}{n}(1-e^{-t})+ 2e^{-t}}\right|, \left|\frac{e^{-t}}{\frac{2}{n}(1-e^{-t})-2 + 2e^{-t}} \right| \right\}
  \]
  is bounded as $n \to \infty$.
  Now we consider the second sum in \eqref{eq:sum_decomposision}:
  \begin{align*}
    \frac{1}{n}&\sum_{i=1}^m \sum_{j=1}^n \mathds{1}_{\{\varepsilon_{ij} = -1\}} \delta_{ij}(t)D_{ij} g(\varepsilon) \\
    &= \frac{1}{n} \sum_{i=1}^m \frac{1}{2}
    \left(
    f\!\left(\dots, \sum_{j'=1}^n Y_{ij'}, \, \dots \right) \!-
    f\!\left(\dots, \sum_{j'=1}^n Y_{ij'} +1, \, \dots \right) \right)
    \! \sum_{j=1}^n \mathds{1}_{\{\varepsilon_{ij} = -1\}}\delta_{ij}(t).
  \end{align*}
  Let $B^{(n)}_i$ be as in Lemma \ref{lem:binomial_limit}.
  Then, for any $i=1,\dots,n$, we obtain
  \[
  \frac{1}{n}\sum_{j=1}^n \mathds{1}_{\{\varepsilon_{ij} = -1\}}\delta_{ij}(t) =
  - B^{(n)}_i \frac{e^{-t}}{2(1-e^{-t})} + \frac{n-\sum_{j'=1}^n Y_{ij'}-B^{(n)}_i}{n} \cdot \frac{e^{-t}}{2- \frac{2}{n}(1-e^{-t})}.
  \]
  Therefore, by Lemma \ref{lem:binomial_limit}, we have that
  \begin{align*}
  - B^{(n)}_i \frac{e^{-t}}{2(1-e^{-t})} &\overset{d}\longrightarrow -\frac{e^{-t}}{2(1-e^{-t})}\eta_i(t) \text{ as } n \to \infty,
  \end{align*}
  and
  \begin{align*}
  \frac{n-\sum_{j'=1}^n Y_{ij'}-B^{(n)}_i}{n} \cdot \frac{e^{-t}}{2- \frac{2}{n}(1-e^{-t})}  &\overset{d}\longrightarrow \frac{e^{-t}}{2} \text{ as } n \to \infty.
  \end{align*}
  Finally, we conclude that
  \[
    \frac{1}{n}\sum_{i=1}^m \sum_{j=1}^n \mathds{1}_{\{\varepsilon_{ij} = -1\}} \delta_{ij}(t)D_{ij} g(\varepsilon)
    \]
    converges in distribution to
    \[
    \frac{1}{4} \sum_{i=1}^m (f(N_1, \dots, N_m) - f(N_1, \dots,N_i +1, \dots, N_m))
    \left(e^{-t}- \frac{e^{-t}}{(1-e^{-t})}\eta_i(t)\right).
  \]
%
  Thus, using the assumption that $f$ is bounded, we achieve the desired result by taking $n\to\infty$ in \eqref{eq:cor2.1prelimit}.
  \end{proof}

  \begin{proof}[Proof of Corollary \ref{cor:poisson_type_ineq}]
    By a similar symmetrization argument to that in the proof of Corollary \ref{cor:biased_hypercube_type}, observing that $\mathbb E \tilde \eta_i(t) = 0$, we have that 
    \[
      \mathbb E\!\left\|\sum_{i=1}^m \tilde\eta_i(t) D_i^{\mathbb{Z}} f(N)\right\|^p \leq 2^p T_p(X)^p\sum_{i=1}^m\mathbb E \!
      \left\|\tilde\eta_i(t)D_i^{\mathbb{Z}} f(N)\right\|^p.
    \]
    By the fact that the $\tilde\eta_i(t)$ are i.i.d.\  and independent of $D_i^{\mathbb{Z}} f(N)$, we have by Theorem \ref{thm:poisson_inequality} that 
    \[
\left(\mathbb E\|f(N) - \mathbb E f(N)\|^p\right)^{\frac{1}{p}} \leq 2
T_p(X) 
\left( \int_0^\infty   \left(\mathbb E \!
\left|\tilde\eta_1(t)\right|^p\right)^\frac{1}{p} \, dt\right)
\left(\sum_{i=1}^m\mathbb E \!
\left\|D_i^{\mathbb{Z}} f(N)\right\|^p\right)^\frac{1}{p}.
  \]
  Notice that 
  \[
    \left(\mathbb E \!
    \left|\tilde\eta_1(t)\right|^p\right)^\frac{1}{p} \leq \left(\mathbb E \!
    \left|\tilde\eta_1(t)\right|^2\right)^\frac{1}{2} = \frac{e^{-t}}{\sqrt{1-e^{-t}}}
  \]
  and 
  \[
    \int_0^\infty \frac{e^{-t}}{\sqrt{1-e^{-t}}} \, dt = 2,
  \]
  which concludes our proof.
  \end{proof}

\bibliographystyle{plain}
\bibliography{references}

\begin{thebibliography}{10}

\bibitem{bobkov1997poincare}
S.~Bobkov and M.~Ledoux.
\newblock Poincar\'e's inequalities and {T}alagrand's concentration phenomenon for the exponential distribution.
\newblock {\em Probab. Theory Related Fields}, 107(3):383--400, 1997.

\bibitem{CorderoErausquin2023}
Dario Cordero-Erausquin and Alexandros Eskenazis.
\newblock Talagrand's influence inequality revisited.
\newblock {\em Anal. PDE}, 16(2):571--612, 2023.

\bibitem{CorderoErausquin2024}
Dario Cordero-Erausquin and Alexandros Eskenazis.
\newblock Discrete logarithmic {S}obolev inequalities in {B}anach spaces.
\newblock {\em J. Lond. Math. Soc. (2)}, 109(2):Paper No. e12873, 24, 2024.

\bibitem{diaconis1996logarithmic}
P.~Diaconis and L.~Saloff-Coste.
\newblock Logarithmic {S}obolev inequalities for finite {M}arkov chains.
\newblock {\em Ann. Appl. Probab.}, 6(3):695--750, 1996.

\bibitem{enflo1978infinite}
Per Enflo.
\newblock On infinite-dimensional topological groups.
\newblock {\em S{\'e}minaire Maurey-Schwartz}, pages 1--11, 1978.

\bibitem{eskenazis2023some}
Alexandros Eskenazis.
\newblock Some geometric applications of the discrete heat flow.
\newblock {\em arXiv preprint arXiv:2310.01868}, 2023.

\bibitem{hytonen2016analysis}
Tuomas Hyt\"{o}nen, Jan van Neerven, Mark Veraar, and Lutz Weis.
\newblock {\em Analysis in {B}anach spaces. {V}ol. {I}. {M}artingales and {L}ittlewood-{P}aley theory}, volume~63.
\newblock Springer, Cham, 2016.

\bibitem{Ivanisvili2020}
Paata Ivanisvili, Ramon van Handel, and Alexander Volberg.
\newblock {Rademacher type and Enflo type coincide}.
\newblock {\em Annals of Mathematics}, 192(2):665 -- 678, 2020.

\bibitem{lee2017separators}
James~R. Lee.
\newblock Separators in region intersection graphs, 2017.

\bibitem{naor2012introduction}
Assaf Naor.
\newblock An introduction to the {R}ibe program.
\newblock {\em Japanese Journal of Mathematics}, 7:167--233, 2012.

\bibitem{naor2021average}
Assaf Naor.
\newblock An average {J}ohn theorem.
\newblock {\em Geometry \& Topology}, 25(4):1631--1717, 2021.

\bibitem{pisier1986}
Gilles Pisier.
\newblock Probabilistic methods in the geometry of {B}anach spaces.
\newblock In Giorgio Letta and Maurizio Pratelli, editors, {\em Probability and Analysis}, pages 167--241, Berlin, Heidelberg, 1986. Springer Berlin Heidelberg.

\bibitem{rabinovich2008average}
Yuri Rabinovich.
\newblock On average distortion of embedding metrics into the line.
\newblock {\em Discrete \& Computational Geometry}, 39(4):720--733, 2008.

\bibitem{talagrand1993isoperimetry}
Michel Talagrand.
\newblock Isoperimetry, logarithmic {S}obolev inequalities on the discrete cube, and {M}argulis' graph connectivity theorem.
\newblock {\em Geometric \& Functional Analysis GAFA}, 3(3):295--314, 1993.

\bibitem{talagrand1994on}
Michel Talagrand.
\newblock On {R}usso's approximate zero-one law.
\newblock {\em The Annals of Probability}, 22(3):1576--1587, 1994.

\bibitem{van2014probability}
Ramon van Handel.
\newblock {\em Probability in high dimension}.
\newblock Lecture notes, 2016.

\end{thebibliography}

{
\footnotesize
{\scshape 
\noindent
Program in Applied \& Computational Mathematics, Princeton University, Princeton, NJ 08544, USA.}

{
  \noindent
  \textit{\bfseries Email:} \href{mailto:miriam_gordin@brown.edu}{\texttt{miriam\_gordin@brown.edu}}
}
}

\end{document}